%
%
%

\documentclass[graybox]{svmult}

\usepackage{helvet}         
\usepackage{courier}        
\usepackage{type1cm}        
%
\usepackage{makeidx}         
\usepackage{graphicx}        
\usepackage{multicol}        
\usepackage[bottom]{footmisc}


\usepackage{amsfonts} 
\usepackage{amsmath} 
\usepackage{amssymb}

\def\KK{{\mathbb K}}
\def\lto{\longmapsto}
\def\lra{\longrightarrow}
\def\id{\mathrm{id}} 
\def\sigo{\sigma}
\def\sigol{\stackrel{\leftarrow}\sigma}
\def\sigor{\stackrel{\rightarrow}\sigma}
\def\sol{\stackrel{\leftarrow}s}
\def\sor{\stackrel{\rightarrow}s}

  \newcounter{zlist}

\begin{document}

\title*{Homothetic Rota-Baxter systems and Dyck$^m$-algebras}
\author{Tomasz Brzezi\'nski}
\institute{Tomasz Brzezi\'nski  \at Department of Mathematics, Swansea University, 
Swansea University Bay Campus,
Fabian Way,
Swansea,
  Swansea SA1 8EN, U.K., \email{T.Brzezinski@swansea.ac.uk} 
  \and 
Faculty of Mathematics, University of Bia{\l}ystok, K.\ Cio{\l}kowskiego  1M,
15-245 Bia\-{\l}ys\-tok, Poland, 
\email{T.Brzezinski@uwb.edu.pl}  }
%
%
\maketitle

\abstract{It is shown that generalized Rota-Baxter operators introduced in [W.A.\ Martinez, E.G.\ Reyes \& M.\ Ronco, {\em Int.\ J.\ Geom.\ Meth.\ Mod.\ Phys.} {\bf 18} (2021) 2150176] are a special case of Rota-Baxter systems [T.\ Brzezi\'nski, {\em J.\ Algebra} {\bf 460} (2016), 1--25]. The latter are enriched by homothetisms and then shown to give examples of Dyck$^m$-algebras.}

\section{Introduction}
\label{sec.int}
Rota-Baxter operators first appeared in \cite{Bax:ana} in analysis of differential operators on commutative Banach algebras, then brought to combinatorics \cite{Rot:Bax}, and are now intensively studied e.g.\ in probability, renormalization of quantum field theories, the theory of operads, dialgebras, trialgebras, dendriform algebras, pre-Lie algebras etc.; see \cite{Guo:int} for  progress up to the early 2010s.

A {\em Rota-Baxter operator of weight $\lambda$} on an associative algebra $A$ over a field $\KK$ is a linear operator $R:A\to A$, such that, for all $a,b\in A$,
\begin{equation}\label{RB.lambda}
R(a)R(b) = R\left(R(a) b + a R(b) +\lambda\ ab\right),
\end{equation}
where $\lambda$ is a scalar.
The pair $(A,R)$ is often referred to as  a {\em Rota-Baxter algebra}.

In a recently published article \cite{MarRey:gen} Martinez, Reyes \& Ronco introduced a generalization of Rota-Baxter operators that involves a pair of scalars $(\alpha,\beta)$. A {\em generalized Rota-Baxter operator of weights $(\alpha,\beta)$} on an associative algebra $A$ is a linear map $\bar{R}:A\to A$, such that, for all $a,b\in A$,
\begin{equation}\label{RB.al.bet}
\bar{R}(a)\bar{R}(b) = \bar{R}\left(\bar{R}(a) b + a \bar{R}(b) +\alpha\ ab\right) +\beta ab. 
\end{equation}
The authors of \cite{MarRey:gen} then proceed to show that such operators of weights $(3,2)$  give rise to a class of Dyck$^m$-algebras introduced and studied in \cite{LopPre:alg}, \cite{LopPre:sim} in context of dendriform structures. 

This note has two aims. First, we show that generalized Rota-Baxter operators are examples of Rota-Baxter systems \cite{Brz:Rot}. Second, we enrich Rota-Baxter systems with homothetisms \cite{Red:ver} or self-permutable bimultiplications \cite{Mac:ext} that were introduced in studies of ring extensions and constrain them in such a way as to produce examples of Dyck$^m$-algebras that  extend those in \cite{MarRey:gen}.

\section{Rota-Baxter systems, homothetisms and Dyck$^m$-algebras}
\label{sec.RB}

\subsection{Rota-Baxter systems and generalized Rota-Baxter operators}\label{ssec.RB}
The following notion was introduced in \cite{Brz:Rot}. An associative algebra $A$ (over a field $\KK$) together with a pair of linear operators $R,S: A\lra A$ is called a {\em Rota-Baxter system} if, for all $a,b\in A$,
\begin{subequations}\label{RB}
\begin{gather}
R(a)R(b) = R\left(R(a) b + a S(b)\right) , \label{RB.r}\\
S(a)S(b) = S\left(R(a) b + a S(b)\right) . \label{RB.s}
\end{gather}
\end{subequations}
As explained in \cite{Das:gen}, conditions \eqref{RB} can be recast in the form of a single Nijenhuis operator. Recall from \cite{CarGra:qua} that a Nijenhuis tensor or operator on an associative algebra $B$ is a linear function $N: B\lra B$ such that, for all $a,b\in B$,
\begin{equation}\label{Nij}
N(a)N(b) = N(N(a)b + aN(b) - N(ab)).
\end{equation}
Starting with an algebra $A$ we can form an algebra $B$ on the vector space $A\oplus A\oplus A$ with the product
$$
\begin{pmatrix}
a\cr b \cr c
\end{pmatrix}
\begin{pmatrix}
a'\cr b' \cr c'
\end{pmatrix}
= 
\begin{pmatrix}
ab\cr a'b' \cr ac'+cb'
\end{pmatrix}
$$
Then $(A,R,S)$ is a Rota-Baxter system if and only if 
$$
N = \begin{pmatrix}
0 & 0 & R\\
0 & 0 & S\\
0 & 0  &0
\end{pmatrix}
$$
is a Nijenhuis operator on $B$.

If $R$ is a  Rota-Baxter operator of weight $\lambda$, then setting $S=R+\lambda\ \id$ one obtains a Rota-Baxter system. In a similar way, one can interpret a generalized Rota-Baxter operator of weights $(\alpha,\beta)$ as a Rota-Baxter system. More precisely,

\begin{lemma}\label{lem.RB}
Let $\bar{R}$ be a generalized Rota-Baxter operator of weights $(\alpha,\beta)$ and let $\lambda,\mu\in \KK$ be such that
\begin{equation}\label{ablm}
\alpha = \lambda+ \mu, \qquad \beta = \lambda\mu.
\end{equation}
Set
\begin{equation}\label{abrs}
R = \bar{R} +\lambda\ \id, \qquad S = \bar{R} +\mu\ \id.
\end{equation}
Then $(A,R,S)$ is a Rota-Baxter system.
\end{lemma}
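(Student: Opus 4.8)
The plan is to verify the two identities \eqref{RB.r} and \eqref{RB.s} by direct substitution of \eqref{abrs}, using nothing beyond the defining relation \eqref{RB.al.bet} of $\bar{R}$ and the scalar constraints \eqref{ablm}. The single observation that makes the computation transparent is that the element to which $R$ is applied in \eqref{RB.r} and $S$ is applied in \eqref{RB.s} is, in both cases, exactly the argument of $\bar{R}$ appearing on the right-hand side of \eqref{RB.al.bet}: indeed, from \eqref{abrs} and the first equation in \eqref{ablm},
\begin{equation*}
R(a)b + a S(b) = \bar{R}(a)b + \lambda ab + a\bar{R}(b) + \mu ab = \bar{R}(a)b + a\bar{R}(b) + \alpha\, ab
\end{equation*}
for all $a,b\in A$.

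With this in hand, I would expand \eqref{RB.r}. Its left-hand side is $\bar{R}(a)\bar{R}(b) + \lambda\bar{R}(a)b + \lambda a\bar{R}(b) + \lambda^2 ab$, while applying $R=\bar{R}+\lambda\,\id$ to the argument just computed and then invoking \eqref{RB.al.bet} to replace $\bar{R}\bigl(\bar{R}(a)b+a\bar{R}(b)+\alpha ab\bigr)$ by $\bar{R}(a)\bar{R}(b)-\beta ab$ gives
\begin{equation*}
R\!\left(R(a)b+aS(b)\right) = \bar{R}(a)\bar{R}(b) - \beta ab + \lambda\bar{R}(a)b + \lambda a\bar{R}(b) + \lambda\alpha\, ab .
\end{equation*}
Comparing the two expressions, \eqref{RB.r} holds if and only if $\lambda^2 = \lambda\alpha - \beta$, which is immediate from \eqref{ablm} since $\lambda\alpha-\beta = \lambda(\lambda+\mu)-\lambda\mu = \lambda^2$. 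The identity \eqref{RB.s} is treated in exactly the same way, with the outer operator $R=\bar{R}+\lambda\,\id$ replaced by $S=\bar{R}+\mu\,\id$ (the inner argument $R(a)b+aS(b)$ being unchanged), and it reduces to $\mu^2 = \mu\alpha-\beta$, which \eqref{ablm} again guarantees. This completes the argument.

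I do not anticipate a genuine obstacle: the proof is a short, essentially mechanical computation once the argument-matching observation is noted. The only point worth recording is conceptual rather than technical — the constraints \eqref{ablm} are precisely Vieta's relations expressing that $\lambda$ and $\mu$ are the two roots of $x^2 - \alpha x + \beta = 0$, equivalently that each of $\lambda$ and $\mu$ satisfies $x^2 = \alpha x - \beta$; this is exactly the scalar identity needed to cancel the residual multiples of $ab$, so the hypothesis is the natural one to impose.
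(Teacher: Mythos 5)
Your computation is correct and is precisely the ``straightforward calculation'' that the paper explicitly leaves to the reader: the argument-matching observation $R(a)b+aS(b)=\bar{R}(a)b+a\bar{R}(b)+\alpha\,ab$ together with the scalar identities $\lambda^2=\lambda\alpha-\beta$ and $\mu^2=\mu\alpha-\beta$ (Vieta) verifies both \eqref{RB.r} and \eqref{RB.s}. Nothing is missing.
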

This is checked by a straightforward calculation (left to the reader). As a consequence and in view of \cite[Proposition~2.5]{Brz:Rot}, one concludes, for example that there is a dendriform algebra associated to a generalized Rota-Baxter operator $\bar{R}$ (cf.\ \cite[Proposition~18]{MarRey:gen}).

Of course, if $\KK$ is not an algebraically closed field, equations \eqref{ablm} for $\lambda$ and $\mu$ might not have solutions. However, the most interesting case of a generalized Rota-Baxter operator studied in \cite{MarRey:gen} corresponds to the weights $(3,2)$, hence one can take $\lambda =1$ and $\mu=2$ then.

\subsection{Homothetisms and homothetic Rota-Baxter systems}\label{ssec.hom}
The following notion originates from studies of homology of rings and ring extensions in 
 \cite{Red:ver} and \cite{Mac:ext}. Let $A$ be an associative algebra. By a {\em double operator} $\sigma$ on $A$ we mean a pair of linear operators   $\sigma = (\sigor,\sigol)$  on $A$,
$$
\sigor:A\lra A,\quad a\lto \sigo a,\qquad \sigol:A\lra A,\quad a\lto a\sigo .
$$
The somewhat unusual way of writing the argument to the left of the operator (in the definition of $\sigol$) proves very practical and economical in expressing  the action of double operators with additional properties, in particular those that we are introducing presently.

A double operator $\sigo$ on $A$ is called a  {\em bimultiplication} \cite{Mac:ext} or a {\em bitranslation} \cite{Pet:ide} if, for all $a,b\in A$,
\begin{equation} \label{h.linear}
\sigo(ab) = (\sigo a)b, \qquad (ab)\sigo = a( b\sigo)\quad \& \quad
a(\sigo b) = (a\sigo) b.
\end{equation}

A bimultiplication $\sigo$ is called a
{\em double homothetism} \cite{Red:ver} or is said to be  {\em self-permutable} \cite{Mac:ext} provided that, for all $a\in A$,
\begin{equation}\label{h.comm}
(\sigo a) \sigo =\sigo (a \sigo) .
\end{equation}
The first two conditions in \eqref{h.linear} mean that $\sigor$ is a right and $\sigol$ is a left $A$-module homomorphism. A bimultiplication is called simply a   {\em multiplication} in  \cite{Hoc:coh}. In functional analysis, in particular in the context of $C^*$-algebras, bimultiplications are known as {\em multipliers} \cite{Hel:mul}, \cite{Bus:dou}. The set of all bimultiplications is a unital algebra, known as a {\em multiplier algebra}. The relations  \eqref{h.comm} mean that  endomorphisms $\sigor$ and  $\sigol$ mutually commute in the endomorphism algebra of the vector space $A$.  Put together, conditions \eqref{h.linear} and \eqref{h.comm} mean that one needs not put any brackets in strings of letters that involve elements of $A$ and $\sigma$.  

Any element of $A$, say $s\in A$, induces a double homothetism $\bar{s} = (\sor,\sol)$ on $A$,
$$
\sor: a\lto sa, \qquad \sol: a\lto as.
$$
Such double homothetisms are said to be {\em inner} and they form an ideal in the multiplier algebra. In applications of bimultiplications to ring extensions, most recently in connecting extensions of integers to trusses (sets with an associative binary operation distributing over a ternary abelian heap operation) \cite{AndBrz:ide} the key role is played by the quotient of the multiplier algebra by the ideal of inner homothetisms; in particular in the theory of operator algebras this is known as the  {\em corona algebra}. 

The rescaling by $\lambda\in \KK$ understood as a pair $(\lambda \id, \lambda\id)$ is a double homothetism, which we will also denote by $\lambda$. If $A$ has the identity $1$, then, of course $\lambda$ is an inner homothetism, $\overline{\lambda 1}$.  

We are now ready to define the main notion of this note.
\begin{definition}
Let $(A,R,S)$ be a Rota-Baxter system and let $\sigma$ be a double homothetism  on $A$. We say that $(A,R,S,\sigma)$  is a {\em homothetic Rota-Baxter system} if, for all $a\in A$,
\begin{equation}\label{h.cons}
S(a)\sigma - \sigma R(a) = \sigma a \sigma.
\end{equation}
\end{definition}

If $\bar{R}$ is a generalized Rota-Baxter operator of weights $(\alpha,\beta)$ such that 
$$
\gamma := \pm \sqrt{\alpha^2 - 4\beta} \in \KK,
$$
then $(A,R,S, \gamma)$ is a homothetic Rota-Baxter system, where $(A,R,S)$ corresponds to $\bar{R}$ through Lemma~\ref{lem.RB} (the sign depends on the choice of $\lambda$ and $\mu$, i.e.\ whether $\lambda < \mu$ or $\lambda >\mu$). 

\subsection{Dyck$^m$-algebras}\label{ssec.D}
The combinatorial, algebraic and operadic aspects of a certain class of lattice paths counted by Fuss-Catalan numbers led  L\'opez, Pr\'eville-Ratelle \& Ronco in \cite{LopPre:alg} to introduce the following notion.

Let $m$ be a natural number. A $\KK$-vector space $A$ together with $m+1$ linear operations $\ast_i: A\otimes A\lra A$, $i=0,\ldots , m$ such that, for all $a,b,c\in A$
\begin{subequations}\label{Dyck}
\begin{equation}\label{Dyck.1}
a\ast_i (b\ast_j c) = (a\ast_i b)\ast_j c, \qquad 0\leq i<j\leq m,
\end{equation}
\begin{equation}\label{Dyck.2}
a\ast_0(b\ast_0 c) = \left(\sum_{i=0}^m a\ast_i b\right)\ast_0 c, 
\end{equation}
\begin{equation}\label{Dyck.3}
a\ast_m \left(\sum_{i=0}^m b\ast_i c\right) = (a\ast_m b)\ast_m c, 
\end{equation}
\begin{equation}\label{Dyck.4}
a\ast_i \left(\sum_{k=0}^i b\ast_k c\right) = \left(\sum_{k=i}^m a\ast_k b\right)\ast_i c, \qquad 1\leq i\leq m-1,
\end{equation}
\end{subequations}
is called a {\em Dyck$^m$-algebra}.

 Dyck$^m$-algebras generalize associative algebras (the $m=0$ case) and Loday's dendriform algebras (the $m=1$ case) \cite[Section~5]{Lod:dia}. In \cite{MarRey:gen} it is shown that one can associate Dyck$^m$-algebras to any generalized Rota-Baxter operator of weights $(3,2)$ (see Theorem~20 and Proposition~21 in \cite{MarRey:gen} for explicit formulae). Aided by this observation we will associate Dyck$^m$-algebras  to any homothetic Rota-Baxter system.


\section{Dyck$^m$-algebras from homothetic Rota-Baxter systems}\label{sec.main}
The main result of this note is contained in the following theorem.

\begin{theorem}\label{thm.main}
Let $(A,R,S,\sigma)$ be a homothetic Rota-Baxter system and let $m$ be any natural number. Define $m+1$ linear operations $\ast_i: A\otimes A \lra A$, $i=0,\ldots, m$ as follows:
\begin{equation}
a\ast_i b = \begin{cases} 
R(a) b, & i=0,\\
(-1)^{i+1}a\sigma b, & i = 1,\ldots, m-1,\\
a S(b) -\frac{1 + (-1)^m}2 a\sigma b, & i=m,
\end{cases}
\end{equation}
for all $a,b\in A$. Then $\left(A, \{\ast_i\}_{i=0}^m\right)$ is a Dyck$^m$-algebra.
\end{theorem}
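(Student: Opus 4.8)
The plan is to verify each of the four families of identities \eqref{Dyck.1}--\eqref{Dyck.4} directly by substituting the definitions of the $\ast_i$ and reducing everything to the Rota-Baxter system axioms \eqref{RB}, the homothetism axioms \eqref{h.linear}--\eqref{h.comm}, and the compatibility condition \eqref{h.cons}. The key simplifying device is the remark made in the text: because of \eqref{h.linear} and \eqref{h.comm} no brackets are needed in any word built from elements of $A$ and $\sigma$, so expressions like $a\sigma b$, $R(a)\sigma b$, $a\sigma R(b)$, $S(a)\sigma b$ are all unambiguous. A preliminary step worth isolating is a reformulation of the key axiom: \eqref{h.cons} says $S(a)\sigma b - \sigma R(a) b = \sigma a\sigma b$ for all $a,b$, and applying it on the right instead gives $a S(b)\sigma - a\sigma R(b) = a\sigma b\sigma$; I will use both forms freely. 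I also record the two ``mixed'' Rota-Baxter relations obtained by applying $R$ (resp.\ $S$) to \eqref{h.cons} sandwiched appropriately — but in fact the cleanest route is to note that the operations split into three regimes ($i=0$, $0<i<m$, $i=m$) and to treat the relevant index-pairs case by case.

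First I would dispose of \eqref{Dyck.1}, the ``associativity across a strict gap'' relations $a\ast_i(b\ast_j c)=(a\ast_i b)\ast_j c$ for $i<j$. These break into the sub-cases $(i,j)=(0,k)$ with $0<k<m$, $(i,j)=(0,m)$, $(i,j)=(k,\ell)$ with $0<k<\ell<m$, $(i,j)=(k,m)$ with $0<k<m$. In the purely ``middle'' case $0<k<\ell<m$ both sides equal $(-1)^{k+\ell}a\sigma b\sigma c$ by \eqref{h.linear}--\eqref{h.comm}, so this is immediate. The case $(0,k)$ reads $R(a)(b\ast_k c)=(R(a)b)\ast_k c$, i.e.\ $(-1)^{k+1}R(a)(b\sigma c)=(-1)^{k+1}(R(a)b)\sigma c$, again immediate from \eqref{h.linear}. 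The case $(0,m)$ is $R(a)(bS(c)-\tfrac{1+(-1)^m}2 b\sigma c)=R(a)b\,S(c)-\tfrac{1+(-1)^m}2 R(a)b\,\sigma c$, once more pure bimodule bookkeeping. The only genuinely substantive case here is $(k,m)$ with $0<k<m$, where the identity becomes $(-1)^{k+1}a\sigma(bS(c)-\tfrac{1+(-1)^m}2 b\sigma c)=((-1)^{k+1}a\sigma b)S(c)-\tfrac{1+(-1)^m}2((-1)^{k+1}a\sigma b)\sigma c$; this too collapses after using \eqref{h.linear} to move $\sigma$ through, since $a\sigma(bS(c))=a\sigma b\,S(c)$ and $a\sigma(b\sigma c)=(a\sigma b)\sigma c$. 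So \eqref{Dyck.1} costs essentially nothing beyond the bimultiplication axioms.

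Next come the three ``sum'' identities. Note first that $\sum_{i=0}^m a\ast_i b = R(a)b + \big(\sum_{i=1}^{m-1}(-1)^{i+1}\big) a\sigma b + aS(b) - \tfrac{1+(-1)^m}2 a\sigma b$; the telescoping sign sum $\sum_{i=1}^{m-1}(-1)^{i+1}$ equals $\tfrac{1+(-1)^m}2$ minus a correction — a short computation shows it equals $1$ when $m$ is even and $0$ when $m$ is odd, which is exactly $\tfrac{1-(-1)^m}2$... I will pin this down carefully, the upshot being that the $a\sigma b$ contributions cancel and $\sum_{i=0}^m a\ast_i b = R(a)b + aS(b)$ (this is the point of the alternating-sign choice). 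Granting that, \eqref{Dyck.2} becomes $R(a)(R(b)c)=\big(R(a)b+aS(b)\big)\ast_0 c = R(R(a)b+aS(b))c = (R(a)R(b))c$ by \eqref{RB.r}, done; and \eqref{Dyck.3} becomes $aS\big(R(b)c+bS(c)\big)-\tfrac{1+(-1)^m}2 a\sigma\big(R(b)c+bS(c)\big)$ on the left, versus $\big(aS(b)-\tfrac{1+(-1)^m}2 a\sigma b\big)S(c)-\tfrac{1+(-1)^m}2\big(aS(b)-\tfrac{1+(-1)^m}2 a\sigma b\big)\sigma c$ on the right, and here I expand using \eqref{RB.s} for the $S(\cdots)$ term and \eqref{h.linear} to push $\sigma$ through, then match. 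Finally \eqref{Dyck.4}, for $1\le i\le m-1$: the left side is $(-1)^{i+1}a\sigma\big(R(b)c + \sum_{k=1}^{i}(-1)^{k+1}b\sigma c\big)$ and the right side is $\big(\sum_{k=i}^{m-1}(-1)^{k+1}+\text{(the $k=m$ term)}\big)$-weighted combination ending in $\ast_i$, i.e.\ $(-1)^{i+1}\big(\sum_{k=i}^{m} a\ast_k b\big)\sigma c$. Both sides reduce, after pulling $\sigma$ through everything with \eqref{h.linear}--\eqref{h.comm}, to a scalar multiple of $a\sigma R(b)\sigma c$ plus a scalar multiple of $a\sigma b\sigma c$ plus — from the $k=m$ piece on the right — a term $aS(b)\sigma c$; the compatibility condition \eqref{h.cons} in the form $aS(b)\sigma c = a\sigma R(b)c + a\sigma b\sigma c$ is exactly what is needed to reconcile the two, after checking that the surviving scalar coefficients of $a\sigma b\sigma c$ agree (another short alternating-sum computation).

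The main obstacle, and the only place real content enters beyond bookkeeping, is \eqref{Dyck.4} together with the bookkeeping of the alternating signs: one must verify that the coefficient of $a\sigma b\sigma c$ produced on the two sides — coming from the truncated geometric sums $\sum_{k=1}^{i}(-1)^{k+1}$ on the left and $\sum_{k=i}^{m-1}(-1)^{k+1}$ (plus the $-\tfrac{1+(-1)^m}2$ from the $k=m$ term) on the right — match after \eqref{h.cons} is used to trade $S(b)\sigma$ for $\sigma R(b)+\sigma b\sigma$. This is where the specific sign pattern $(-1)^{i+1}$ and the correction term $-\tfrac{1+(-1)^m}2 a\sigma b$ in $\ast_m$ are forced; I would organize the computation by first reducing every term to the normal-form basis $\{R(a)bc,\ aS(b)c \text{ (via axioms)},\ a\sigma b\sigma c,\ a\sigma R(b)c\}$ and then equating coefficients, handling $m$ even and $m$ odd uniformly via the $\tfrac{1\pm(-1)^m}2$ notation. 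The $m=1$ case should be checked to recover the known dendriform algebra of \cite[Proposition~2.5]{Brz:Rot}, as a sanity check. $\qed$
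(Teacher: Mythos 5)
Your proposal is correct and follows essentially the same route as the paper: a direct case-by-case verification of \eqref{Dyck.1}--\eqref{Dyck.4} using \eqref{h.linear}, \eqref{h.comm}, \eqref{RB} and \eqref{h.cons}, with the alternating signs arranged exactly so that $\sum_{i=0}^m a\ast_i b = R(a)b + aS(b)$ (the correct closed form for the middle sum is $\sum_{i=1}^{m-1}(-1)^{i+1}=\tfrac{1+(-1)^m}{2}$, as your stated values indicate); the paper merely splits into the cases $m$ odd and $m$ even instead of your uniform $\tfrac{1\pm(-1)^m}{2}$ bookkeeping. One point to make explicit in the write-up: for $m$ even, matching the two sides of \eqref{Dyck.3} requires not only \eqref{RB.s} and \eqref{h.linear} but also \eqref{h.cons}, in the form $aS(b)\sigma c = a\sigma R(b)c + a\sigma b\sigma c$, to absorb the leftover $aS(b)\sigma c$ and $a\sigma b\sigma c$ terms.
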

\begin{proof}
We will carefully check that all the relations between different operations listed in  \eqref{Dyck} hold. Starting with \eqref{Dyck.1}, if $i\neq 0$ and $j\neq m$, these equations reduce to the equality $a\sigma(b\sigma c) = (a\sigma b)\sigma c$ which follows by \eqref{h.linear} and \eqref{h.comm}. For $i=0$ and $j\neq m$, \eqref{Dyck.1} amounts to the equality $R(a)(b\sigma c) = (R(a)b)\sigma c$, which holds by \eqref{h.linear}. The proofs of \eqref{Dyck.1} for $j=m$, and all the remaining equalities in \eqref{Dyck} depend on the parity of $m$. So, we will consider two separate cases in turn.

Assume that $m$ is odd. Then \eqref{Dyck.1} with $i=0$ and $j=m$ follows immediately by the associativity of $A$, while $i\neq 0$ and $j=m$ amounts to the equality $a(b\sigma S(c)) = (ab)\sigma S(c)$, which holds by \eqref{h.linear}.

Since $m$ is odd \eqref{Dyck.2}  reduces to 
$$
a\ast_0(b\ast_0 c) = (a\ast_0b +a\ast_mb )\ast_0 c,
$$
that is 
$$
R(a)R(b)c = R(R(a)b+aS(b))c,
$$
and this immediately follows by \eqref{RB.r}. In a similar way \eqref{Dyck.3} follows by \eqref{RB.s}. 

We split checking \eqref{Dyck.4} into two cases. If $i$ is even, then \eqref{Dyck.4} reduces to
$$
a\ast_i(b\ast_0c) = (a\ast_m b + a\ast_{m-1} b)\ast_i c,
$$
which amounts to the equality
\begin{equation}\label{i.even}
a\sigma R(b) c = (aS(b)-a\sigma b)\sigma c,
\end{equation}
that follows by the constraint \eqref{h.cons}. If $i$ is odd, then  \eqref{Dyck.4} is equivalent to 
$$
a\ast_i(b\ast_0c+ b\ast_1c) = (a\ast_m b )\ast_i c,
$$
that is,
\begin{equation}\label{i.odd} 
R(a)(R(b)c +b\sigma c) = (aS(b))\sigma c
\end{equation}
and thus again follows by the definition of a double homothetism  and \eqref{h.cons}. This completes the proof of the theorem for $m$ odd.

Assume now that $m$ is even. We look back at two remaining cases in \eqref{Dyck.1}. If $i=0$ and $j=m$, then
$$
\begin{aligned}
a\ast_0(b\ast_m c) &= R(a)bS(c) - R(a)(b\sigma c)\\
& = R(a)bS(c) - (R(a)b)\sigma c = (a\ast_0b)\ast_m c,
\end{aligned}
$$
by the associativity of $A$ and \eqref{h.linear}. In a similar way, if $i\neq 0$ and $j=m$, \eqref{Dyck.1} is equivalent to the equality
$$
a\sigma(bS(c)) - a\sigma (b\sigma c) = (a\sigma b)S(c) - (a\sigma b)\sigma c,
$$
which follows by \eqref{h.linear} and \eqref{h.comm}.

Since $m$ is even \eqref{Dyck.2}  reduces to 
$$
a\ast_0(b\ast_0 c) = (a\ast_0b +a\ast_1 b + a\ast_mb )\ast_0 c,
$$
and, in view of the definition of $\ast_m$,  this immediately follows by \eqref{RB.r}. In a similar way \eqref{Dyck.3} follows by \eqref{RB.s}. 

As for the $m$-odd case, we split checking \eqref{Dyck.4} into two cases. If $i$ is even, then \eqref{Dyck.4} reduces to
$$
a\ast_i(b\ast_0c) = (a\ast_m b)\ast_i c,
$$
which is the same as \eqref{i.even}, while, for $i$ odd,  \eqref{Dyck.4} is equivalent to 
$$
a\ast_i(b\ast_0c+ b\ast_1c) = (a\ast_m b+a\ast_{m-1} b )\ast_i c,
$$
that is already proven equality \eqref{i.even}. This completes the proof of the theorem.
\end{proof}

If $\bar{R}$ is a generalized Rota-Baxter operator of weights $(3,2)$, then the corresponding Rota-Baxter system $R = \bar{R} + \id$, $S = \bar{R} +2\id$ is constrained by the homothetism $1$, induced by the rescaling by the identity in $\KK$, and thus Theorem~\ref{thm.main} implies \cite[Theorem~20 \& Proposition~21]{MarRey:gen}.
\begin{acknowledgement}
The research  is partially supported by the National Science Centre, Poland, grant no. 2019/35/B/ST1/01115.
\end{acknowledgement}
%
%


\end{document}